\newtheorem{theorem}{Theorem}[section]
\newtheorem{remark}[theorem]{Remark}
\newtheorem{lemma}[theorem]{Lemma}
\newtheorem{ques}[theorem]{Question}
\title{A distinction between the paraboloid and the sphere in weighted restriction}
\author{Alex Iosevich and Ruixiang Zhang}
\newcommand{\R}{\mathbb{R}}
\newcommand{\C}{\mathbb{C}}
\newcommand{\Z}{\mathbb{Z}}
\newcommand{\cN}{\mathcal{N}}
\newcommand{\cF}{\mathcal{F}}
\newcommand{\cM}{\mathcal{M}}
\newcommand{\bi}{\mathrm{i}}
\newcommand{\dd}{\mathrm{d}}
\newcommand{\e}{\varepsilon}
\begin{document}

\maketitle

\begin{abstract} For several weights based on lattice point constructions in $\R^d (d \geq 2)$, we prove that the sharp $L^2$ weighted restriction inequality for the sphere is very different than the corresponding result for the paraboloid. The proof uses Poisson summation, linear algebra, and lattice counting. We conjecture that the $L^2$ weighted restriction is generally better for the circle for a wide variety of general sparse weights.
\end{abstract}

\tableofcontents

\section{Introduction}


In this paper, we present some results in \emph{weighted (Fourier) restriction theory}. They concern the size of the best constant $C(X)$ in the estimate
\begin{equation}
    \|\widecheck{f\dd \sigma}\|_{l^p (X)} \leq C(X) \|f\|_{L^q (S, \dd \sigma)},
\end{equation}
where $S \subset \R^d$ is a compact hypersurface with surface measure $\dd \sigma$ and $X \subset \R^d$ is a finite set. \footnote{There are also equivalent formulations where $X$ is of positive Lebesgue measure, in which case the norm $L^p(X)$ is considered instead. See also Remark \ref{blurrem}.} 

Fourier restriction was first studied by Stein \cite{stein1979some} in the ``unweighted'' case where $X$ is the set of all lattice points.\footnote{This version is slightly different from the standard version but both are equivalent by the uncertainty principle. We use this formulation because it fits better into the framework we use in this paper.} He observed that if $S$ has non-vanishing Gaussian curvature, then the behavior of $C(X)$ is often non-trivial. He made a far-reaching conjecture in \cite{stein1979some} called the \emph{(Fourier) Restriction Conjecture} concerning the case where $C(X)$ is a constant.

More precisely, an equivalent formulation of the Restriction Conjecture says that if $S$ is the unit sphere with surface measure $\dd \sigma$, then
\begin{equation} \label{continuousrestriction} \|\widecheck{f\dd \sigma}\|_{l^p (\Z^d)} \lesssim_{p, q} \|f\|_{L^{q} (\dd \sigma)} \end{equation} whenever 
$$ p>\frac{2d}{d-1}, \ \text{and} \ q' \leq \frac{d-1}{d+1}p.$$
This conjecture is still open in dimensions $3$ and higher. See e.g. \cite{St93}; for a thorough description of the problem, and \cite{guth2016restriction, guth2018restriction, hickman2019improved, Wang2022, hickman2023note, wang2022improved, guo2022dichotomy} and the references therein for some recent developments.

In contrast, when $X$ is allowed to be general, the problem is usually more difficult, and even in the case $p=q=2$ it is still far from being well understood.

In both Fourier restriction and weighted restriction, it is the curvature of $S$ that makes the problem highly nontrivial. For natural curved objects in $\R^d$, the sphere $S^{d-1}$ and the (truncated) unit paraboloid $P^{d-1}$ are the first to come to one's mind. There have been many old and new results on Fourier restriction and weighted restriction, and all of them, as far as we are aware, work equally well for the case $S=S^{d-1}$ and the case $S = P^{d-1}$.

The above-mentioned progress has led to applications for many analysis problems, but leaves the following question unaddressed: Does $C(X)$ always have to be essentially the same for the case $S = S^{d-1}$ and the case $S = P^{d-1}$?  In this paper, we exhibit a situation where the answer is no to this question. We construct an $X$ where the best constant $C(X)$ differs significantly between the case of the sphere and the paraboloid. 

It is known that $C(X)$ is the same for $S^{d-1}$ and $P^{d-1}$ for certain $X$. One important such example is the ``train track'' that plays a crucial role in the study of Falconer's Distance Conjecture (see also \S \ref{Falcconj}). See \cite{guth2020falconer} and the references therein for a nice introduction to this example and more background. However, when $X$ is very spread out, it was certainly anticipated by experts that $C(X)$ should be smaller for the sphere. This is because sharp examples (such as the one explained in \S 4.2 of \cite{du2023free}) that work for $P^{d-1}$ usually do not work for $S^{d-1}$. Therefore, $C(X)$ is expected to be much smaller for $S = S^{d-1}$. However, it is very hard to take advantage of $S$ being $S^{d-1}$ using classical harmonic analytic  techniques, since many tools there, including most recent ones, do not see the difference between $S^{d-1}$ and a general positively curved $C^2$ hypersurface. Indeed, we were not aware of any prior examples of $X$ where $C(X)$ is known to be significantly different for $S^{d-1}$ and $P^{d-1}$ in the literature. 

On the other hand, it is known in the context of Stein's original $L^2$ Fourier Restriction Problem that there is no distinction between the sphere $S^{d-1}$ and the paraboloid $P^{d-1}$, $d \ge 2$. Take $q=2$ for example, for smooth surfaces, it is not difficult to show that the sharp $L^2$ restriction exponent $\frac{2(d+1)}{d-1}$ is achieved if and only if the surface has non-vanishing Gaussian curvature. See, for example, \cite{Iosevich99}. For general exponents, the problem is conjectured to behave in the same way for the sphere and the paraboloid too.

\subsection{Main results} We start by stating our main weighted restriction results in two dimensions. Given a compact smooth curve $C \subset \R^2$ with arc length $\dd s$ and an $L^1$ function supported on $C$, define the \emph{Fourier extension} of $f$ to be 
\begin{equation}\label{defnEC}
E_C f (x) = \int_C f(\xi) e^{2\pi \bi x \cdot \xi}\dd s (\xi).
\end{equation}

Let $\beta\geq 0$ be a parameter, and $R>1$ be a large number, consider the following subset:
$$X = X_{\beta} = \left\{(m_1 R^{\beta}, m_2 R^{2\beta}): m_1, m_2 \subset {\mathbb Z}\right\} \cap [-R , R]^2 \subset {\mathbb R}^2.$$ 

Let $P^1$ be the (truncated) unit parabola
$$P^1 = \{(x, x^2): |x| \leq 1\}$$ with length measure $\dd s_P$. By a construction in Barcel\'{o}--Bennett--Carbery--Ruiz--Vilela \cite{barcelo2007some} (see \cite{du2023free} for an exposition), one can deduce the following bound. 

\begin{theorem}\label{paraXlower}
When $0\leq  \beta \leq  \frac{1}{2}$,
\begin{equation}\label{para2D}
\|E_{P^1}\|_{L^2 (\dd s_P) \to l^2 (X)} \gtrsim R^{\frac{1}{2}-\beta}.
\end{equation}
\end{theorem}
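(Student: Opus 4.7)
The plan is to prove the lower bound on the adjoint side via a weighted Dirac comb on $X$. By duality,
\begin{equation*}
\|E_{P^1}\|_{L^2(\dd s_P)\to l^2(X)}^2 = \sup_{g\colon X\to\C}\frac{\|E_{P^1}^{*}g\|_{L^2(\dd s_P)}^2}{\|g\|_{l^2(X)}^2},\qquad E_{P^1}^{*}g(\xi) = \sum_{x\in X}g(x)\,e^{-2\pi\bi(x_1\xi + x_2\xi^2)}.
\end{equation*}
I would fix a smooth nonnegative product $\phi(x_1,x_2)=\phi_1(x_1)\phi_2(x_2)$ compactly supported inside $[-1/2,1/2]^2$, chosen so that $\hat\phi_1,\hat\phi_2$ are concentrated in a very narrow interval about $0$, and set $g(x)=\phi(x/R)$ for $x\in X$. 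The support condition automatically places $g$ inside $X$, and a Riemann-sum computation gives $\|g\|_{l^2(X)}^2\sim R^{2-3\beta}$.

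The crux is to analyze $E_{P^1}^{*}g$ via Poisson summation applied separately to the sums in $m_1$ and $m_2$. Factoring $E_{P^1}^{*}g=S_1\cdot S_2$ as a product and applying Poisson summation to each factor yields
\begin{equation*}
S_1(\xi) = R^{1-\beta}\sum_{k\in\Z}\hat\phi_1(R\xi + kR^{1-\beta}),\qquad S_2(\xi) = R^{1-2\beta}\sum_{l\in\Z}\hat\phi_2(R\xi^2 + lR^{1-2\beta}).
\end{equation*}
The key observation, a \emph{parabolic rationality} phenomenon, is that the peaks of $S_1$ sit at $\xi=kR^{-\beta}$ for $k\in\Z$, and at each such $\xi$ the value $\xi^2=k^2R^{-2\beta}$ automatically takes the form $lR^{-2\beta}$ with the integer $l=k^2$, placing $\xi$ simultaneously on a peak of $S_2$. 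By choosing the supports of $\hat\phi_1,\hat\phi_2$ narrow enough, one arranges that on a $\sim R^{-1}$-neighborhood of each $\xi_k=kR^{-\beta}$ with $|k|\leq cR^\beta$ only the resonant indices contribute, so that $|S_1|\gtrsim R^{1-\beta}$, $|S_2|\gtrsim R^{1-2\beta}$, and hence $|E_{P^1}^{*}g(\xi)|\gtrsim R^{2-3\beta}$ there.

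Summing the contributions from the $\sim R^\beta$ disjoint peaks, each of width $\sim R^{-1}$, gives
\begin{equation*}
\|E_{P^1}^{*}g\|_{L^2(\dd s_P)}^2 \gtrsim R^\beta\cdot R^{-1}\cdot R^{2(2-3\beta)} = R^{3-5\beta},
\end{equation*}
and dividing by $\|g\|_{l^2(X)}^2\sim R^{2-3\beta}$ yields $\|E_{P^1}\|^2\gtrsim R^{1-2\beta}$, i.e.\ the claimed $R^{1/2-\beta}$. The main technical point, and the reason a na\"ive single-Knapp-cap construction only delivers the weaker $R^{1/2-3\beta/2}$, is engineering the \emph{simultaneous} resonance of the two one-dimensional Poisson sums; this requires the parabolic containment $\xi^2\in R^{-2\beta}\Z$ at $\xi\in R^{-\beta}\Z$. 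The same alignment fails for the sphere, foreshadowing the distinction announced in the abstract.
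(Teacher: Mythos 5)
Your argument is correct and is essentially the mechanism the paper relies on (the Barcel\'{o}--Bennett--Carbery--Ruiz--Vilela example, equivalently the parabola analogue of the identity \eqref{normiscounteq}): the simultaneous resonance $\xi\in R^{-\beta}\Z\Rightarrow\xi^{2}\in R^{-2\beta}\Z$ is exactly the statement that $P^1$ passes through $\sim R^{\beta}$ points of the dual lattice $L^{*}$, which is what makes the count (and hence the operator norm) large for the parabola. The one technical point to make explicit is that $\hat{\phi}_1,\hat{\phi}_2$ cannot be compactly supported when $\phi_1,\phi_2$ are, so rather than claiming ``only the resonant indices contribute'' you should choose $\hat{\phi}_j=|\hat{\eta}_j|^{2}\geq 0$ (as the paper does with $\psi$), after which the single resonant term already yields the stated lower bounds $|S_1|\gtrsim R^{1-\beta}$ and $|S_2|\gtrsim R^{1-2\beta}$ uniformly in $0\leq\beta\leq\tfrac12$.
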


Our main result highlights the difference between the circle and a parabola in this context. Let $S^1$ be the unit circle $\subset \R^2$ with length measure $\dd s_C$. We will prove the following weighted restriction theorem:

\begin{theorem}\label{circlerestr}
When $0\leq \beta \leq \frac{1}{28}$,
$$\|E_{S^1}\|_{L^2 (\dd s_C) \to l^2 (X)} \lesssim_{\e} R^{\frac{1}{2}-\frac{3}{2}\beta + \e}.$$ 

\vskip.125in 

Equivalently, 

\begin{equation}\label{main2D}
\|E_{S^1} f\|_{l^2 (X)} \lesssim_{\e} R^{\frac{1}{2}-\frac{3}{2}\beta + \e} \|f\|_{L^2 (\dd s_C)}, \end{equation}  
\end{theorem}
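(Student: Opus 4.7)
\emph{Plan.} By the $TT^*$ trick, the inequality \eqref{main2D} is equivalent to the $L^2(\dd s_C)$ operator norm bound $\|T\| \lesssim R^{1-3\beta+2\e}$, where $T = E_{S^1}^{*}\mu_X E_{S^1}$ has integral kernel
\[
K(\xi-\eta) \;=\; \sum_{x \in X} e^{2\pi \bi\, x \cdot (\xi-\eta)}.
\]
I would first soften $\mathbf{1}_{[-R,R]^2}$ in the definition of $X$ to a Schwartz bump $\phi(\cdot/R)$ with $\hat\phi$ compactly supported (a routine reduction), and apply Poisson summation on the lattice $L = R^\beta \Z \times R^{2\beta}\Z$, whose covolume is $R^{3\beta}$ and whose dual is $L^* = R^{-\beta}\Z \times R^{-2\beta}\Z$. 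This produces the dual-side representation
\[
K(\zeta) \;\approx\; R^{2-3\beta} \sum_{\lambda \in L^*} \hat\phi\bigl( R(\lambda-\zeta) \bigr),
\]
a superposition of bumps of height $R^{2-3\beta}$ and width $R^{-1}$ centered at the dual lattice points.

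Next I would apply Schur's test. For any $\xi \in S^1$, each individual bump cuts the translated circle $\xi - S^1$ in an arc of length $\lesssim R^{-1}$, hence
\[
\sup_{\xi \in S^1} \int_{S^1}|K(\xi-\eta)|\,\dd s_C(\eta) \;\lesssim\; R^{1-3\beta} \cdot \sup_{\xi} N(\xi), \qquad N(\xi) := \#\bigl\{\lambda \in L^* : \dist(\lambda,\, \xi - S^1) \lesssim R^{-1}\bigr\}.
\]
It then remains to show $N(\xi) \lesssim_\e R^\e$ uniformly in $\xi \in S^1$. A purely transversal count (one admissible $b$ per integer $a$, valid since $\beta < 1/2$) already yields $N(\xi) \lesssim R^\beta$, but this is exactly the count that reproduces the paraboloid lower bound in Theorem~\ref{paraXlower}, so we genuinely need to save a full factor of $R^\beta$.

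The saving is precisely where the circle, as opposed to the parabola, enters. After the substitution $\lambda = (aR^{-\beta},bR^{-2\beta})$ and clearing denominators, the defining condition $|\xi-\lambda|^2 = 1 + O(R^{-1})$ rearranges to the approximate Pythagorean identity
\[
R^{2\beta}(a-\xi_1 R^\beta)^2 + (b-\xi_2 R^{2\beta})^2 \;=\; R^{4\beta} + O(R^{4\beta-1}).
\]
When the real shifts $\xi_1 R^\beta$ and $\xi_2 R^{2\beta}$ happen to be integers and $R^\beta$ is an integer $N$, this reduces, after a short linear-algebraic computation, to counting $(m,k)\in\Z^2$ with $m^2+k^2=N^2$, and $r_2(N^2) \lesssim_\e N^\e$ by the classical divisor bound for Gaussian integers --- exactly the $R^\e$ goal. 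This sum-of-two-squares rigidity is exactly what the paraboloid lacks: for $\xi = (0,0) \in P^1$, the analogous identity simplifies to $b = -a^2$, which always admits $\sim R^\beta$ solutions. For general $\xi$, I would handle the non-integer shifts by pigeonholing according to the Diophantine type of $(\xi_1 R^\beta, \xi_2 R^{2\beta})$ and combining this with the $r_2$ bound via careful bookkeeping of the $\lambda$-contributions; the restriction $\beta \le 1/28$ appears to arise from balancing the tolerance $R^{4\beta-1}$ in the Pythagorean identity against the integer scale and the various approximation losses incurred in this reduction.

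The main obstacle is exactly this final lattice-counting step: upgrading the transversal $R^\beta$ bound to a uniform $R^\e$ bound on $N(\xi)$. This is impossible for the parabola --- its difference set has no $x^2+y^2$ invariant --- and becomes tractable for the circle only via the arithmetic of $r_2$. Additional care is required near the degenerate geometry where $|\xi-\lambda|$ is close to $0$ or to $2$ (almost-tangency of $S^1$ and $\lambda+S^1$, where the per-bump arclength degrades from $R^{-1}$ to $R^{-1/2}$), which should be handled by a standard truncation of these exceptional $\lambda$.
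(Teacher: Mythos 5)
Your overall framework coincides with the paper's: $TT^*$ plus Poisson summation on $L=R^\beta\Z\times R^{2\beta}\Z$, reducing the operator norm to $R^{1-3\beta}$ times the maximal number of points of $L^*$ in an $O(R^{-1})$-neighborhood of a translate of $S^1$ (the paper runs the convolution operator on $l^2(L)$ and takes the sup of its Fourier multiplier rather than Schur's test on $L^2(\dd s_C)$, but these are equivalent here). The tangency worry you raise at the end is not actually an issue once the exponential sum is softened: each dual-lattice bump of width $R^{-1}$ meets the circle in an arc of length $O(R^{-1})$ regardless of position, and the whole difficulty is concentrated in the count $N(\xi)$.

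That count is where your proposal has a genuine gap. You establish $N(\xi)\lesssim_\e R^\e$ only when $R^\beta\in\Z$ and the shifts $\xi_1R^\beta,\ \xi_2R^{2\beta}$ are integers, via $r_2(N^2)\lesssim_\e N^\e$; for general real $\xi$ you propose ``pigeonholing according to the Diophantine type'' of the shifts, but no such reduction works: a badly approximable center cannot be moved to a lattice point without destroying the $O(R^{4\beta-1})$ tolerance, and the divisor bound has no purchase on an inequality with irrational coefficients. The uniform-in-translation count is the actual content of the theorem (the paper's Lemma \ref{counting2D}), and the mechanism that makes it work is different from yours: because the neighborhood has width $10^{-100}R^{-1}$, after rescaling by $R^\beta$ the tolerance is $r^{-27}$ (with $r=R^\beta$, $\beta=\tfrac1{28}$), so small that for any six lattice points in the neighborhood the integer $6\times6$ determinant of the vectors $(1,x,y,x^2,y^2,xy)$ has absolute value $<1$ and hence vanishes. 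Thus \emph{all} the lattice points lie on a single conic; Cramer's rule then produces a rational conic of height $O(r^C)$ whose quadratic part is $10^{-10}$-close to that of the ellipse (hence nondegenerate), and Heath-Brown's Theorem 3 gives $O_\e(r^\e)$ integer points on it. This rigidity argument is what replaces your exact sum-of-two-squares identity, and it is also what produces the threshold $\beta\le\tfrac1{28}$; without it (or an equivalent), the proof is incomplete.
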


Theorem \ref{circlerestr} and a variant (Theorem \ref{circlesqrestr}) will be proved in \S \ref{pf2Dsec}. We will then prove a high dimensional generalization (Theorem \ref{sphererestr}) in Section \ref{highDsection}.

\subsection{Summary of our approach} We briefly summarize the main ideas of the proof. The set $X$ is chosen to be a lattice inside a large ball and it turns out that we can prove a key relation \eqref{normiscounteq} using the Poisson summation formula. It connects $C(X)$ with a lattice point count whenever $X$ is the set of lattice points inside a box. This allows us to take advantage of the fact that the lattice counting numerology is very different for sufficiently small neighborhoods of $S^1$ (or $S^{d-1}$) and $P^1$ (or $P^{d-1}$). When we take a circle or an ellipse, the number of lattice points on it is controlled efficiently using a theorem in the work of Heath-Brown (\cite{heath1997density}, Theorem 3). Using linear algebra, we show that when the neighborhood of an ellipse is sufficiently small, all the lattice points in the neighborhood lie on another ellipse. This allows us to apply Heath-Brown's theorem to finish the proof.

It is interesting to note that weighted restriction theory and harmonic analysis in general have been useful in establishing lattice counting bounds, with the Gauss circle problem being perhaps the best-known example. See e.g. \cite{bourgain2016proof} or the very recent works \cite{kiyohara2022lattice, li2023improvement}, with the last one containing an improvement to the Gauss circle problem. A general result about lattice counting in neighborhoods of a class of hypersurfaces is proved   in \cite{IT2011}. We refer the readers to references therein for more background on lattice point counting. One highlight of the present paper is that we are able to make things work in the reverse direction and use lattice counting to obtain new weighted restriction estimates, particularly those that behave differently when the paraboloid is replaced by the sphere.

\emph{Incidence geometry} is an important motivation of the weighted restriction estimates we have been focusing on. It is interesting that in incidence geometry there are also (conjectural) differences between incidence estimates associated with parabolas and circles. We will have a discussion of them in \S \ref{differenceinincidencesec}. They are, in general, conjectural, but we give an example of a restricted domain convolution operator bound where the difference between the circle and a parabola is quite concrete and consistent with the spirit of the main result of this paper.

\subsection{Difference between the circle and parabola in incidence geometry}\label{differenceinincidencesec}

Incidence geometry concerns how certain geometric objects (e.g. lines, circles, etc.) can intersect. It is a folklore heuristic that many incidence geometric problems should behave differently when the objects are changed from unit circles (spheres) to unit parabolas (paraboloids). This dichotomy has a long and rich history and we describe some such problems below. 

\subsubsection{The Erd\H{o}s Unit Distance Conjecture}\label{Erdosdist}

The classical Erd\H{o}s Unit Distance Conjecture \cite{erdos1946sets} 
predicts that if $E \subset {\mathbb R}^2$ consisting of $n$ points, then 
\begin{equation} \label{unitdistanceconjecture} |\{(x,x') \in E \times E: |x-x'|=1\}| \lesssim_{\e} n^{1+\e}, \forall \e > 0. \end{equation} 

The best-known result \cite{spencer1984unit} says that the left-hand side in (\ref{unitdistanceconjecture}) is $O(n^{\frac{4}{3}})$. There has been no progress on this problem since the early 80's. If the spherical distance is replaced by a parabolic distance, it was observed by Pavel Valtr in 2004 (unpublished) that the exponent $\frac{4}{3}$ cannot be improved based on an elementary grid constriction.   Indeed, consider an $N \times N^2$ integer grid centered at the origin, and consider the translates of the parabola $y=x^2$ by every point in the grid. This gives us $N^3$ points and $N^3$ parabolas, while the number of incidences is $\sim N^4$.

While the above-described results do not quite prove that there is a fundamental difference between the parabola and the circle since the single-distance conjecture is not resolved, the evidence is quite strong, and this is just the tip of the proverbial iceberg. 

\subsubsection{Falconer's version for the Unit Distance Conjecture}

Another interesting instance where the paraboloid-sphere distinction arises is in the study of Falconer's continuous version of Erd\H{o}s's distance problems. Falconer (\cite{Falconer1986}) proved that if $\mu$ is a compactly supported Borel measure in ${\mathbb R}^d$, $d \ge 2$, then 
\begin{equation} \label{LinftyFalconer} \mu \times \mu \{(x,y): 1 \leq |x-y| \leq 1+\epsilon \} \leq C \cdot I_{\frac{d+1}{2}}(\mu) \cdot \epsilon,\end{equation} where $I_s(\mu)$ is the usual energy integral 
$$ \int \int {|x-y|}^{-s} d\mu(x)d\mu(y).$$

Mattila (\cite{Mattila1987}) proved that the exponent $\frac{d+1}{2}$ cannot be improved when $d=2$, but the higher dimensional case remained open. Iosevich and Senger (\cite{Iosevich2016}) extended Mattila's example to three dimensions, but in dimensions four and higher they were only able to establish the sharpness of the $\frac{d+1}{2}$ exponent for the parabolic metric, not the Euclidean metric, since the argument crucially relies on the largeness of the number of lattice points on a dilated paraboloid. This theme will be exploited in this paper as well. 

\subsubsection{Falconer's (Distinct) Distance Conjecture}\label{Falcconj}

Erd\H{o}s also has a conjecture on distinct distances. One version of this conjecture predicts $n$ points in $\R^d$ determine $\gtrsim_{\e} n^{\frac{2}{d}-\e}$ distinct distances. The two dimensional case of the above version was proved by Guth-Katz in \cite{guth2015erdHos} and the high dimensional case remains open.

Falconer made a continuous version of this conjecture too, which is commonly known as \emph{Falconer's Distance Conjecture}. In the study of this problem, Du (\cite{D2020}) and Iosevich-Rudnev (\cite{IR07}) proved results, all based on the properties of the integer lattice points in one way or another, showing that the key object in the study of the Falconer distance problem, 
$$ \int_{S^{d-1}} {|\widehat{\mu}(R\omega)|}^2 d\omega,$$ where $\mu$ is a Borel measure supported on a compact subset of ${\mathbb R}^d$, $d \ge 2$, cannot satisfy the best possible estimates if $S^{d-1}$ is replaced by the paraboloid, greatly limiting how close one come to the resolution of the Falconer distance conjecture using current methods. See, for example, \cite{DZ19} and \cite{GIOW20} to put this into the context of the Falconer distance conjecture and related problems. 

\vskip.125in 

An interesting feature of the examples given in this section so far is that while the differences between the case of the sphere and the paraboloid are conjectured, none of them is actually known.

\vskip.125in 

\subsubsection{Szemeredi-Trotter incidence theorem and convolution bounds} The Spencer-Szemeredi-Trotter bound \cite{spencer1984unit} introduced in \S \ref{Erdosdist} is also valid for the parabolic distance setting. Furthermore, it can be generalized to a Szemeredi-Trotter incidence theorem for unit parabolas (or unit circles) by the same proof. One version of this theorem asserts that for $n$ translations for a parabola (or a circle), the number of $k$-rich points on them is $O(\frac{n^2}{k^3}+\frac{n}{k})$. It can be transferred to restricted weak-type inequalities for convolution operators as we explain below.

Let $\Gamma_R=\{(n,n^2):n \in \Z, |n|\leq R\}$ where $R>1$. For a compactly supported $f$ on $\Z^2$, 
$$ A_{\Gamma_R}f(x)=\sum_{y \in \Z^2: x-y \in \Gamma_R} f(y).$$
It follows that if $E \subset \Z^2$ of size $>R^2$, then  
$$|\left\{x \in \Z^2: A_{\Gamma_R} 1_E(x)>k \right\}| \lesssim {|E|}^2 k^{-3},$$ which can be viewed as a $\left(\frac{3}{2}, 3 \right)$ restricted weak-type bound for $A_{\Gamma_R}$. This bound is sharp for e.g. $k=\frac{R}{2}$ and $E=$ the $R \times R^2$ grid centered at the origin.


On the other hand, if $\Gamma_R$ is replaced by the set of integer points on a circle or an ellipse at scale $R$ or $R^2$. Then by lattice counting,  the total number of lattice points is $O_{\e} (R^{\e})$. From this  one can trivially get much better bound for the new convolution operator.

\section*{Acknowledgements}

The first listed author was supported in part by the National Science Foundation under grant no. HDR TRIPODS - 1934962 and by the NSF DMS - 2154232. The second listed author was supported in part by NSF DMS-2143989 and the Sloan Research Fellowship.  The second listed author would like to thank Larry Guth who introduced him to Question \ref{onepointques}, and Xiumin Du and Yumeng Ou for helpful discussions on weighted restriction. 

\section*{Notations}

We will use $\e$ to denote an arbitrarily small positive number. $C$ or $C(d, k)$ etc. will denote constants depending only on specified parameters that may vary from line to line. For a set $X$ in an Euclidean space and $\delta>0$, denote the $\delta$-neighborhood of $X$ by $\cN_{\delta} (X)$.

Let $\gamma>0$ be a real parameter. We say that $X$ is $O(Y)$ or $X \lesssim Y$ if there exists a positive constant $C$  such that $X \leq C Y$. We say $X$ is $O_{\gamma} (Y)$ or $X \lesssim_{\gamma} Y$ if the $C$ above depends on $\gamma$. In the above situations, we say $Y = \Omega(X)$ or $Y=\Omega_{\gamma} (X)$. 

\vskip.25in 

\section{Proof of Theorem \ref{circlerestr}}\label{pf2Dsec}

In this section, we prove Theorem \ref{circlerestr}. The proof relies on Poisson summation and the following counting lemma:

\begin{lemma}\label{counting2D}
    For all $0\leq \beta \leq \frac{1}{28}$ and $R>1$, every translation of $\cN_{10^{-100}R^{-1}} (S^1)$ contains $O_{\e} (R^{\e})$ points in $R^{-\beta} \Z \times R^{-2\beta} \Z$.
\end{lemma}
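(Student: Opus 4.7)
The strategy is (i) rescale anisotropically so that the problem becomes counting integer points in a thin tube around an ellipse; (ii) use linear algebra to force these integer points to lie on a single integer-coefficient conic of bounded height; (iii) apply Heath-Brown's bound on integer points on conics. Since the lattice $R^{-\beta}\Z\times R^{-2\beta}\Z$ is invariant under its own translations, we may reduce the translation vector $(a,b)$ modulo it and assume $|a|\leq R^{-\beta}$, $|b|\leq R^{-2\beta}$. The rescaling $(\xi_1,\xi_2)\mapsto(R^\beta\xi_1,R^{2\beta}\xi_2)$ then sends the lattice to $\Z^2$ and the translated circle to the ellipse
$$
E\colon\quad Q(x,y):=R^{2\beta}(x-A)^2+(y-B)^2=R^{4\beta},
$$
where $A=aR^\beta,\,B=bR^{2\beta}\in[-1,1]$. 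The $10^{-100}R^{-1}$-neighborhood condition becomes $|Q(m_1,m_2)-R^{4\beta}|\leq c_0R^{4\beta-1}$ for an absolute constant $c_0\leq 3\cdot 10^{-100}$. Let $\cP\subset\Z^2$ denote the set of such integer pairs; we want $|\cP|=O_\varepsilon(R^\varepsilon)$, and may assume $|\cP|\geq 7$.

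\textbf{Forcing a common conic.} Embed $\Z^2\hookrightarrow\Z^6$ via $\phi(x,y)=(x^2,xy,y^2,x,y,1)$. The real coefficient vector of $E$,
$$
v_E=(R^{2\beta},\,0,\,1,\,-2R^{2\beta}A,\,-2B,\,R^{2\beta}A^2+B^2-R^{4\beta}),
$$
satisfies $\phi(P)\cdot v_E=Q(P)-R^{4\beta}=O(c_0R^{4\beta-1})$ for each $P\in\cP$, and (for $R$ large enough relative to $1/\beta$) $\|v_E\|_\infty\gtrsim R^{4\beta}$ from the last coordinate. Pick any six points of $\cP$ and form the $6\times 6$ integer matrix $M_6$ with rows $\phi(P_i)$. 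Since $|m_1|\lesssim R^\beta$ and $|m_2|\lesssim R^{2\beta}$ on $\cP$, the six columns of $M_6$ have entries bounded respectively by $R^{2\beta},R^{3\beta},R^{4\beta},R^\beta,R^{2\beta},1$, whose product is $R^{12\beta}$; hence every $5\times 5$ minor is $\lesssim R^{12\beta}$ and $\|\mathrm{adj}(M_6)\|_\infty\lesssim R^{12\beta}$. Cramer's rule then gives
$$
R^{4\beta}\lesssim\|v_E\|_\infty\leq\|M_6^{-1}\|_{\infty\to\infty}\,\|M_6v_E\|_\infty\lesssim\frac{c_0\,R^{12\beta}\cdot R^{4\beta-1}}{|\det M_6|},
$$
i.e.\ $|\det M_6|\lesssim c_0\,R^{12\beta-1}$. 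Since $c_0$ is minuscule and $R^{12\beta-1}\leq R^{-4/7}$ when $\beta\leq 1/28$, the right-hand side is strictly less than $1$; as $\det M_6\in\Z$, we conclude $\det M_6=0$. Thus the full $|\cP|\times 6$ matrix of $\phi$-values has rank $\leq 5$, so its nullspace contains a nonzero integer vector $v_C\in\Z^6$ of sup-norm $\lesssim R^{12\beta}$ (obtained from a $5\times 5$ minor). All points of $\cP$ then lie on the integer conic $C\colon v_C\cdot\phi=0$.

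\textbf{Heath-Brown and the reducible case.} If $C$ is irreducible, then Heath-Brown's Theorem 3 in \cite{heath1997density} bounds its integer points by $O_\varepsilon((R^{12\beta})^\varepsilon)=O_\varepsilon(R^\varepsilon)$ after renaming $\varepsilon$. If instead $C$ degenerates to a pair of lines (or a double line), then each line meets the Euclidean tubular neighborhood of $E$ in an arc of length at most $\sqrt{R_c\cdot\delta_{\text{th}}}$, where the maximal radius of curvature is $R_c\lesssim R^{3\beta}$ (attained near the ends of $E$'s minor axis) and the corresponding Euclidean thickness is $\delta_{\text{th}}\lesssim R^{\beta-1}$ there; this length is $\lesssim R^{2\beta-1/2}$, which is $\ll 1$ for $\beta\leq 1/28$. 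Hence each line carries $O(1)$ lattice points of $\cP$, and the reducible case contributes $O(1)$.

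\textbf{Main obstacle.} The technical crux is the determinant estimate: matching the anisotropic scales of the lattice (via the column bounds $R^\beta,R^{2\beta}$) against the essential lower bound $\|v_E\|_\infty\gtrsim R^{4\beta}$ to force $\det M_6\in\Z$ below $1$. A secondary delicate point is the reducible case, which requires tracking the non-constant radius of curvature of $E$ along its parametrization. It is the combination of these two estimates, together with a safety margin for the downstream use in Theorem \ref{circlerestr}, that produces the clean exponent $\beta\leq 1/28$; a pure linear-algebra version of the argument likely extends all the way to $\beta<1/12$.
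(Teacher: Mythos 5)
Your overall architecture matches the paper's: reduce modulo the lattice, rescale anisotropically to the ellipse $R^{2\beta}x^2+y^2=R^{4\beta}$, use a determinant estimate to force all lattice points in the thin neighborhood onto a single integer conic, and invoke Heath-Brown. Your determinant step is correct and its numerology agrees exactly with the paper's (your $|\det M_6|\lesssim c_0R^{12\beta-1}$ is the paper's bound $10^{-50}r^{-16}$ at $\beta=1/28$, $r=R^{\beta}$); your phrasing via Cramer's rule and the near-null vector $v_E$ is an equivalent, arguably cleaner, packaging of the paper's decomposition $v_j=\mu_j+\nu_j$.

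The gap is in the last step. You take an \emph{arbitrary} nonzero integer null vector $v_C$ and split into ``irreducible conic: apply Heath-Brown'' and ``pair of lines: geometric arc-length bound.'' This dichotomy misses the case where $C$ is a parabola, i.e.\ an irreducible conic whose quadratic part is degenerate ($q_6^2=4q_4q_5$). Heath-Brown's Theorem 3 cannot apply there: $y=x^2$ has height $O(1)$ but contains $\sim B^{1/2}$ integer points in a box of side $B$, so no $O_\e(H^\e)$ bound holds for parabolas, and indeed the paper explicitly checks non-degeneracy of the quadratic part before citing the theorem. Nothing in your argument prevents the (unique, if the rank is $5$) conic through your near-degenerate point configuration from being a parabola, or, if the rank is $\le 4$, prevents your chosen minor-vector from being a degenerate member of a higher-dimensional null space. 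This is exactly the issue the paper's \textbf{Claim} is built to resolve: it produces a rational null vector whose last three coordinates are $10^{-10}$-close to $(r^2,1,0)$, so the quadratic part is positive definite, which rules out parabolas, hyperbolas, and line pairs all at once and verifies Heath-Brown's hypothesis. The mechanism making that stability argument work is that the Cramer's-rule denominators are nonzero \emph{integers}, hence at least $1$ in absolute value, so the $O(r^{-16})$ perturbations of numerators and denominators move each entry by only $O(r^{-4})$. To close your gap you would need either to reproduce this approximation argument or to supply a separate bound showing a low-height parabola cannot carry many lattice points of the thin neighborhood of the ellipse; neither is automatic. (Your reducible-lines computation itself is fine: $\max_\theta R_c(\theta)\delta(\theta)\lesssim R^{4\beta-1}$ does give segments of length $R^{2\beta-1/2}\ll1$, hence $O(1)$ points per line.)
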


\begin{remark} We emphasize that the implied constant in Lemma \ref{counting2D} is uniform, independent of the translation.
\end{remark}

We first prove Theorem \ref{circlerestr} assuming Lemma \ref{counting2D}. It has roots in the classical arguments in \cite{bourga1991besicovitch} and \cite{shayya2021fourier} (see \S 6 of \cite{shayya2021fourier} ). The main difference is that we get to exploit the oscillation of the kernel function by taking advantage of the lattice structure and using  Poisson summation, leading to a better bound for the circle.

\begin{proof}[Proof of Theorem \ref{circlerestr} assuming Lemma \ref{counting2D}]
Let $M$ be the operator norm $\|E_{S^1}\|_{L^2 (\dd s_C) \to l^2 (X)}$. Let $L$ be the lattice $R^{\beta} \Z \times R^{2\beta}\Z$ and $L^* = R^{-\beta} \Z \times R^{-2\beta}\Z$ be its dual. Fix a smooth bump function $\psi$ supported inside the unit disk such that $\psi, \hat{\psi} \geq 0$ and $\psi (0)>0$.

\vskip.125in 

Recall the definition of $E_{C}$ in \eqref{defnEC}, we have the adjoint
$$E_{S^1}^* g (\xi) = \int_{{\mathbb R}^2} g(x) e^{-2\pi \bi x \cdot \xi}\dd x.$$

Hence the kernel of $E_{S^1} E_{S^1}^*$ is
$$K_{E_{S^1} E_{S^1}^*}(x_1, x_2) = \int_C e^{2\pi \bi (x_1-x_2) \cdot \xi} \dd s_C(\xi) = \widecheck{\dd s_C} (x_1-x_2).$$

The $TT^*$ method yields
\begin{eqnarray}\label{TTstar}
M^2 = & \max_{\left\{f: X \to \C : \|f\|_{l^2} = 1 \right\}} \sum_{x_1, x_2 \in X} f(x_1) \overline{f (x_2)} \widecheck{\dd s_C} (x_1-x_2)\nonumber\\
\lesssim & \max_{\left\{f: L \to \C : \|f\|_{l^2} = 1 \right\}} \sum_{x_1, x_2 \in X} f(x_1) \overline{f (x_2)} \widecheck{\dd s_C} (x_1-x_2)\psi (\frac{x_1 - x_2}{R}).
\end{eqnarray}

\vskip.125in 

Note the right-hand side of \eqref{TTstar} is the operator norm of the convolution operator with the kernel $K = \widecheck{\dd s_C} (\cdot)\psi (\frac{\cdot}{R})$ on $l^2 (L)$. This is equal to the $L^{\infty}$ norm of the Fourier transform\footnote{The ``Fourier transform'' $\cF$ here should be understood as the process of constructing a periodic function from its Fourier series. We use a different notation to distinguish it from the Euclidean Fourier transform, defined by $\hat{\cdot}$.} $\cF K$ of $K|_L$ on $\R^2 / L^*$. Take an arbitrary $\xi_0 \in {\mathbb R}^2$ and observe that by Poisson summation, 
\begin{eqnarray}\label{PSFK}
\cF K (\xi) = & \sum_{x \in L} K(x) e^{2 \pi \bi x \cdot \xi_0}\nonumber\\
= & R^{-3\beta} \sum_{\xi \in L^*} \hat{K} (\xi - \xi_0).
\end{eqnarray}

We observe that $\hat{K}$ is the convolution of $\dd s_C$ with the mollifier $R^2 \hat{\psi} (R\cdot)$. Hence by the rapid decaying property of $\hat{\psi}$, the inner sum of the right-hand side of \eqref{PSFK} is $\lesssim R$ times the maximum number of points in $L^*$ inside a translation of $\cN_{\frac{C}{R}}(S^{1})$. By Lemma \ref{counting2D}, the right hand side of  \eqref{PSFK} is $O_{\e} (R^{1-3\beta + \e})$.

\end{proof}

\begin{remark}\label{strongpfrem}
One easily sees that the same $\lesssim$ in the third last line of the last proof can be replaced by $\gtrsim$ if we choose a slightly smaller $C$ in the second last line. A similar statement can be made about the $\lesssim$ in \eqref{TTstar} since the support of the convolution kernel $K$ is on scale $R$. Thus the above proof is strong in the sense that it is an equality connecting $M$ and some point counting data. More precisely by the above argument, we get
\begin{equation}\label{normiscounteq}
    M^2 \sim R^{1-3\beta} \max_{U=\text{ a translation of }\cN{\frac{1}{R}}(S^1)} |U \cap L^*|.
\end{equation}
\end{remark}

\begin{remark}\label{blurrem}
By Minkowski's inequality, the conclusion of Theorem \ref{circlerestr} does not change if one replaces $X=X_{\beta}$ by its $1$-neighborhood and $l^2$ by $L^2$. This ``blurred out'' version has a sometimes more familiar setting in the weighted restriction theory literature. From the uncertainty principle, we also know this ``blurred out'' version is morally equivalent to the original Theorem \ref{circlerestr}.
\end{remark}

We now prove Lemma \ref{counting2D}. The key observation is that when $\beta$ is small enough, all the lattice points in the neighborhood must lie on an ellipse. One can then use a counting lemma in the work of Heath-Brown \cite{heath1997density}.

The case $\beta=0$ is trivial and we assume $\beta>0$ henceforth. First note that by rescaling and taking $r=R^{\beta}$, Lemma \ref{counting2D} is equivalent to the following lemma:

\begin{lemma}\label{countingthm}
    For $r>1$ and all $0<\beta \leq \frac{1}{28}$, every translation of the Minkowski sum 
    $$Ell_{\beta, r}+ [-10^{-100}r^{-\frac{1-\beta}{\beta}}, 10^{-100}r^{-\frac{1-\beta}{\beta}}] \times [-10^{-100}r^{-\frac{1-2\beta}{\beta}}, 10^{-100}r^{-\frac{1-2\beta}{\beta}}]$$ contains $O_{\e} (r^{\e})$ integer points.
    
    Here the ellipse $Ell_{\beta, r}$ is given by the equation $$\left(\frac{x}{r}\right)^2 + \left(\frac{y}{r^2}\right)^2 = 1.$$
\end{lemma}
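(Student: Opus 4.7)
The plan is to combine a linear-algebraic \emph{propagation} argument with a classical lattice-counting bound for ellipses. Concretely, I will show that for $\beta\le 1/28$, all integer points in a given translate of the thickened $Ell_{\beta,r}$ must lie on a single conic whose coefficients are integers of size polynomial in $r$; since that conic turns out to be an ellipse, the number of integer points on it is $O_\e(r^\e)$ by the classical divisor bound for positive-definite binary quadratic forms (equivalently, Heath-Brown's Theorem 3 in \cite{heath1997density}).

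After translating by an integer vector so that the ellipse center $(a,b)$ lies in $[0,1)^2$, any integer $P=(x,y)$ in the thickened ellipse satisfies $|x|,|y|=O(r^2)$. Writing $F(x,y)=r^2(x-a)^2+(y-b)^2-r^4$ and parameterizing points in the thickened ellipse as $(a+r\cos\theta+\delta_1,\,b+r^2\sin\theta+\delta_2)$ with $|\delta_1|\le 10^{-100}r^{-(1-\beta)/\beta}$ and $|\delta_2|\le 10^{-100}r^{-(1-2\beta)/\beta}$, a direct Taylor expansion yields $|F(P)|\le Cr^{4-1/\beta}$. Moreover, the coefficient vector $\vec{v}_F$ of $F$ in the monomial basis $\{x^2,xy,y^2,x,y,1\}$ satisfies $\|\vec{v}_F\|_\infty\ge r^2$ (from the coefficient of $x^2$).

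Let $P_1,\ldots,P_N$ be the integer points and let $A\in\Z^{N\times 6}$ have row $i$ equal to $(x_i^2,x_iy_i,y_i^2,x_i,y_i,1)$; then $\|A\vec{v}_F\|_\infty\le Cr^{4-1/\beta}$. The key step is to show $\mathrm{rank}(A)\le 5$: otherwise some invertible $6\times 6$ integer submatrix $A'$ has $|\det A'|\ge 1$, and Hadamard's inequality on its $5\times 5$ cofactors gives $\|(A')^{-1}\|_\infty\le Cr^{20}$, so $\|\vec{v}_F\|_\infty\le Cr^{24-1/\beta}$, which contradicts $\|\vec{v}_F\|_\infty\ge r^2$ as soon as $\beta<1/22$. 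Hence $\ker A$ contains a primitive integer vector $\vec{v}_G$ of height $\le Cr^{20}$ (its entries being $5\times 5$ minors of $A$), and the conic $G=0$ passes through every $P_i$. Pick a rank-$5$ submatrix $A''\in\Z^{5\times 6}$ of $A$ (ranks $\le 4$ force $N=O(1)$ via a short Bezout argument: two independent conics through the $P_i$ either share a linear factor, reducing to integer points on a single line in a thin neighborhood, which are $O(1)$, or meet in $\le 4$ points). By Cauchy--Binet applied to $A''(A'')^T$, the smallest non-zero singular value of $A''$ is $\gtrsim r^{-16}$, so the orthogonal decomposition $\vec{v}_F=c\vec{v}_G+\vec{w}$ gives $\|\vec{w}\|_\infty=O(r^{20-1/\beta})=O(r^{-8})$. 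The quadratic part of $\vec{v}_G$ is therefore proportional to $(r^2,0,1)$ up to negligible perturbation, so the discriminant $B_G^2-4A_GC_G$ of its quadratic form is a negative integer and $G$ is a genuine ellipse. Finally, the number of integer points on an integer-coefficient ellipse of height $\le r^{20}$ is $O_\e(r^\e)$: complete the square to rewrite $G=0$ as $Q(u,v)=M$ for a positive-definite integer binary quadratic form $Q$ with $|M|=r^{O(1)}$, then invoke the classical divisor bound (or Heath-Brown's Theorem 3).

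The main obstacle is the rank argument: the $r^{4-1/\beta}$ bound on $|F(P)|$ must beat the $r^{20}$ blow-up coming from $(A')^{-1}$, which forces the threshold $\beta<1/22$. The stricter hypothesis $\beta\le 1/28$ in the statement provides slack for lower-order factors and, more importantly, for the perturbation step that certifies $G$ is an ellipse rather than a hyperbola, parabola, or reducible configuration.
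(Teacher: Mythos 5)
Your proposal is correct, and its overall architecture is the same as the paper's: show that all integer points in the thin neighborhood lie on a single conic of polynomially bounded height whose quadratic part is a small perturbation of that of the ellipse, then invoke Heath--Brown's Theorem 3 (equivalently, the divisor bound after completing the square). The implementation of the middle step, however, is genuinely different and arguably cleaner. The paper first shows every $6\times 6$ determinant vanishes by splitting each row into a ``point on the ellipse'' part plus an error and bounding the mixed terms by $10^{-50}r^{-16}<1$; it then produces a \emph{rational} normal vector $q$ of height $O(r^C)$ close to the ellipse's coefficient vector by comparing, via Cramer's rule, the exact linear system $T$ with the perturbed system $\tilde T$, and finally approximating the solution by rationals. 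You instead work directly with the near-kernel relation $\|A\vec v_F\|_\infty\le Cr^{4-1/\beta}$: the rank bound comes from $\|(A')^{-1}\|_\infty\le Cr^{20}$ against $\|\vec v_F\|_\infty\ge r^2$ (the same numerology as the paper's determinant bound, packaged through the adjugate), and the low-height conic is the \emph{integer} vector of $5\times5$ minors, with closeness to $\vec v_F$ certified by the least-singular-value bound $\sigma_{\min}(A'')\gtrsim r^{-16}$ from Cauchy--Binet. This buys you integer rather than rational coefficients and avoids the $T$ versus $\tilde T$ bookkeeping. Two loose ends you should tighten: (i) the rank $\le 4$ case rests on the claim that a line meets the thin neighborhood in $O(1)$ integer points, which needs the chord estimate $\sqrt{\delta\rho}\lesssim\sqrt{r^{-26}\cdot r^{3}}<1$ (the paper instead sets up its Claim to work for any rank $k\le 5$, avoiding this case split); (ii) in the decomposition $\vec v_F=c\vec v_G+\vec w$ you should note $c\ne 0$ (else $\|\vec v_F\|=\|\vec w\|=O(r^{-8})$, contradicting $\|\vec v_F\|\ge r^2$) before reading off the sign of the discriminant. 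Both are routine.
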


Next we prove Lemma \ref{countingthm}, and hence Lemma \ref{counting2D}. 



\begin{proof}[Proof of Lemma \ref{countingthm}]
    It suffices to prove the $\beta = \frac{1}{28}$ case, where $\frac{1-\beta}{\beta} = 27$. Let $S$ be the set of integer points in the Lemma. We use $\tilde{Ell}$ to denote the translation of $Ell_{\beta, r}$ in the assumption. 
    Without loss of generality, we assume  $r \geq 10$ and that both components of the translation do not exceed $1$ in absolute value.  

    First, we claim that all $(x, y) \in S$ must be on the same quadratic curve. This is equivalent to asserting all $(x, y) \in S$ make the vector $v = v(x, y) = (1, x, y, x^2, y^2, xy)$ in one same hyperplane in $\Bbb{R}^6$, in turn equivalent to vanishing of all $6  \times 6$ determinants formed by $6$ such vectors constructed from any $6$ points in $S$. Take any such determinant, note that the $j-th$ row $v_j$ can be written as $v_j = \mu_j + \nu_j$ ($1 \leq j \leq 6$) satisfying: 
    
    (i) $\mu_j$ is similarly constructed from a nearby point on $\tilde{Ell}$. That the points are all on $\tilde{Ell}$ shows that all $\mu_j$ are on a same hyperplane. Moreover, its entries have size $(1, O(r), O(r^2), O(r^2), O(r^4), O(r^3))$ with the implicit constant all bounded by $2$ (note we have taken the translation into account).

    (ii) By the mean value theorem, the ``error term'' $\nu_j$ has its components bounded by 
    $$0, 10^{-100}r^{-27}, 10^{-100}r^{-26}, 10^{-99} r^{-26}, 10^{-99} r^{-24}, 10^{-99}r^{-25},$$ respectively.

    By (i) and (ii), looking at all possible summands in the determinant with at least one error term we conclude that the $|6\times 6$ determinant$|$ is $\leq 10^{-50}r^{-16}$. But it must be an integer and hence is $0$.

\textbf{Claim.} There is a universal constant $C>0$ such that there exists a vector $q \perp$ all above $v (x, y)$ in $\Bbb{R}^6$  and that $q$ has the last $3$ components $10^{-10}$-close to $(r^2, 1, 0)$ and with all entries being rationals with height $O(r^{C})$.

\begin{proof}[Proof of \textbf{Claim}]
Assume all possible $v = v(x, y)$ span a subspace $V \subset \R^6$ of rank $k$. Note $k\leq 5$ by the last paragraph.\footnote{In fact by B\'{e}zout we can deduce $k=5$ as long as $|S|\geq 5$. But for the sake of higher dimensional generalizations in \S \ref{highDsection}, we use a more general argument here not needing this information.} We can choose $k$ linearly independent vectors $v$, and will still use $v_1, \ldots, v_k$ to denote them in this paragraph for simplicity. 
We continue to use the decomposition $v_j = \mu_j + \nu_j$ as earlier in the proof. 

By the theory of linear systems, any point in $V^{\perp}$ can be written as $T (a_1,\ldots, a_{6-k})$ where $T$ is a linear isomorphism: $\R^{6-k} \stackrel{\simeq}{\to} V^{\perp} \subset \R^6$. Furthermore, we can assume $6-k$ coordinates among the $6$ components of  $T$ are coordinate functions of $\R^{6-k}$. 

We use $\tilde{V}$ to denote the span of $\mu_1, \ldots, \mu_k$. By analyzing the error similarly as before we conclude $\tilde{V}$ has dimension $k$. Set up an $\tilde{T}: \R^{6-k} \stackrel{\simeq}{\to} \tilde{V}^{\perp}$ similarly as we did for $T$ and we can assume $\tilde{T}$ has $6-k$ components being coordinate functions, with the same position as in $T$. The entries of the matrix $\cM (T)$ of $T$ and the matrix $\cM (\tilde{T})$ of $\tilde{T}$ are given by Cramer's rule. Analyzing the errors same as before, we see that the difference between each entry of $\cM (T)$ and the same entry of $\cM (\tilde{T})$ is bounded by $10^{-40}r^{-4}$. This can be seen by noticing the following: An entry of $\cM (T)$ has the form $A/B$ with $A$ an integer $\leq 10^{10} r^{12}$ and $B$ a nonzero integer, and the same entry of  $\cM (\tilde{T})$  has the form $(A+Error1)/(B+Error2)$ with $|Error1|, |Error2| \leq 10^{-50}r^{-16}$.

Suppose the equation of $\tilde{Ell}$ is $$r^2 x^2 + y^2 +  c_2 x + c_3 y + c_1 = 0$$ with all $|c_j| \leq 10r^4$. Then we have found a vector $w = (c_1, c_2, c_3, r^2, 1, 0) \in \tilde{V}^{\perp}$ with $|w|\leq 100r^4$. Suppose $w = \tilde{T}\alpha, \alpha \in \R^{6-k}$. Then $|\alpha| \leq 100r^4$ since each component of $\alpha$ is some component of $w$ by the setup of $\tilde{T}$. Let $C_1> 1$ large to be determined and find an approximation of $\alpha$ by a rational vector $\alpha_1$ of height $\leq r^{C_1 + 10}$ such that $$|\alpha-\alpha_1| \leq 10 r^{-C_1}.$$

Now consider $q = T\alpha_1$. It is a rational vector of height $O(r^{C_1+1000})$ and is in $V^{\perp}$. Moreover,
\begin{eqnarray}
      |q-w| = &|T\alpha_1 -\tilde{T}\alpha|\nonumber\\
      \leq & |T(\alpha_1-\alpha)|+|(T-\tilde{T})\alpha|\nonumber\\
      \leq & \|T\|\cdot |\alpha_1-\alpha|   + \|T-\tilde{T}\|\cdot |\alpha| \nonumber\\
      \leq & 10^{20} r^{12} \cdot 10 r^{-C_1} + 10^{-30} r^{-4} \cdot 100r^4  \nonumber\\
      \leq & 10^{-10}
\end{eqnarray}
for sufficiently large $C_1$. Take this $q$ and we finish the proof of \textbf{Claim}.
\end{proof}

The \textbf{Claim} asserts that all points on $V$ are orthogonal to $q=(q_1, q_2, \ldots, q_6)$. Hence every integer point in $S$ is on the quadratic curve $$Q: q_4 x^2 + q_5 y^2 + q_6 xy + q_2 x + q_3 y + q_1 = 0.$$

Moreover, all $|q_j|$ has height $O(r^C)$ and $q_4, q_5, q_6$ are $10^{-10}$-close to $r^2, 1, 0$, respectively. The last property implies the quadratic part of $Q$ is non-degenerate. We now apply Theorem 3 in \cite{heath1997density} to $Q$ and conclude the total number of such integer points is $O_{\e}(r^{\e})$.
\end{proof}

\begin{remark}
   The upper bound of $\beta$ in Lemma \ref{countingthm} is worked out explicitly here and probably can be improved a bit. Easy improvements will make the proof somewhat messier and is probably far from optimal anyway, so we keep the current proof. Also, the particular form of $E$ is not important in the above lemma and the same proof works for $E$ replaced by e.g. a circle of radius $r$. For example, the above proof gives the following variant of Theorem \ref{circlerestr}.
\end{remark}

\begin{theorem}\label{circlesqrestr}
Let $\tilde{X} = R^{\beta}\Z \times R^{\beta}\Z \cap [-R, R]^2 \subset \R^2$. Then for all sufficiently small $\beta \geq 0$,
\begin{equation}\label{mainsq2D}
\|E_{S^1}\|_{L^2 (\dd s_C) \to l^2 (\tilde{X})} \lesssim_{\e} R^{\frac{1}{2}-\beta + \e}. \end{equation}  
\end{theorem}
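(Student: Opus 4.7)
The plan is to run the same Poisson summation + $TT^*$ argument that proved Theorem \ref{circlerestr}, now with the isotropic lattice $L = R^{\beta}\Z \times R^{\beta}\Z$ (covolume $R^{2\beta}$) in place of the anisotropic one. As in Remark \ref{strongpfrem}, this will yield the identity
\begin{equation*}
    \|E_{S^1}\|_{L^2(\dd s_C) \to l^2(\tilde{X})}^2 \sim R^{1-2\beta} \max_U |U \cap L^*|,
\end{equation*}
where $L^* = R^{-\beta}\Z \times R^{-\beta}\Z$ and $U$ ranges over translations of $\cN_{C/R}(S^1)$. So it suffices to prove the counting bound $\max_U |U \cap L^*| = O_\e(R^\e)$.

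Rescaling by $R^\beta$, this is equivalent to the following isotropic analogue of Lemma \ref{countingthm} (the variant alluded to in the remark just before this theorem): every translation of the $\sim R^{\beta-1}$-neighborhood of a circle of radius $r := R^\beta$ contains $O_\e(r^\e)$ integer points. I would prove this by transcribing the proof of Lemma \ref{countingthm}, which is in fact cleaner here since both coordinates live on the same scale. Write $v(x,y) = (1, x, y, x^2, y^2, xy)$ and $v_j = \mu_j + \nu_j$ with $\mu_j$ evaluated at the nearest point on the translated circle. Then $\mu_j$ has entries of sizes $(1, O(r), O(r), O(r^2), O(r^2), O(r^2))$ and the error $\nu_j$ has entries bounded by $(0, O(r^{-\alpha}), O(r^{-\alpha}), O(r^{1-\alpha}), O(r^{1-\alpha}), O(r^{1-\alpha}))$, where $\alpha = (1-\beta)/\beta$. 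The all-$\mu$ $6\times 6$ determinant vanishes because the circle equation is a linear relation among $\{1, x, y, x^2, y^2, xy\}$. In every cross permutation, switching one row from $\mu$ to $\nu$ contributes a factor of $r^{-1-\alpha}$ (the ratios $\nu_{\cdot,j}/\mu_{\cdot,j}$ are comparable in both columns $\{2,3\}$ and columns $\{4,5,6\}$), so the total cross contribution is $O(r^{7-\alpha})$. Once $\beta$ is small enough that $\alpha > 7$, this is $<1$, and since the determinant is an integer it must vanish. Hence all the integer points in question lie on one common quadratic curve $V$.

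The rest of the argument follows the Claim in Lemma \ref{countingthm} essentially verbatim. A translated circle $(x-a)^2 + (y-b)^2 = r^2$ produces a natural normal vector $w = (a^2+b^2-r^2,\, -2a,\, -2b,\, 1,\, 1,\, 0) \in \tilde{V}^\perp$ (after reducing the translation mod integers, the components of $w$ are $O(r^2)$), and its quadratic part $(1,1,0)$ is manifestly non-degenerate. Cramer's rule plus the rational-approximation gadget from the original Claim would then produce $q \in V^\perp$ with rational entries of polynomial height so close to $w$ that its quadratic part remains non-degenerate. Heath-Brown's Theorem 3 in \cite{heath1997density} then bounds the integer points on the resulting conic by $O_\e(r^\e) = O_\e(R^\e)$. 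Combining with the identity above delivers the stated bound $R^{1/2-\beta+\e}$. The hard part, as in Lemma \ref{countingthm}, is the determinant error bookkeeping; everything else is a faithful translation of the circle-vs-ellipse argument, with isotropy making the bounds slightly simpler than the anisotropic case and allowing a generous threshold such as $\beta < 1/8$.
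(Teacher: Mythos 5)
Your proposal is correct and follows exactly the route the paper intends: the paper proves Theorem \ref{circlesqrestr} by remarking that the proof of Theorem \ref{circlerestr} (the $TT^*$/Poisson summation identity \eqref{normiscounteq} adapted to the isotropic lattice, plus the counting Lemma \ref{countingthm} with the ellipse replaced by a circle of radius $r$) goes through verbatim, which is precisely what you carry out. Your bookkeeping (the covolume $R^{2\beta}$ giving the factor $R^{1-2\beta}$, the error sizes $(0, r^{-\alpha}, r^{-\alpha}, r^{1-\alpha}, r^{1-\alpha}, r^{1-\alpha})$, the threshold $\alpha>7$, and the non-degenerate quadratic part $(1,1,0)$ feeding into Heath-Brown's Theorem 3) is accurate.
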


Theorem \ref{circlesqrestr} and its variants can create many more interesting distinctions between the circle and the parabola. Since the proof of Theorem \ref{circlerestr} gives essentially an equality (see Remark \ref{strongpfrem} and the relation \eqref{normiscounteq} there), we see  by the same proof and lattice point counting for the parabola that if $R^{\frac{\beta}{2}}$ is an integer and if we replace $S^1$ by $P^1$, we have for all $0<\beta < 1$ that
\begin{equation}\label{parasqeq}
\|E_{P^1}\|_{L^2 (\dd s_C) \to l^2 (\tilde{X})} \gtrsim R^{\frac{1}{2}-\frac{3}{4}\beta}.
\end{equation}

Note that \eqref{parasqeq} can also be seen directly by looking at the Barcel\'{o}--Bennett--Carbery--Ruiz--Vilela example used for Theorem \ref{paraXlower} with $\beta$ replaced by $\beta/2$ and viewing $\tilde{X}$ as a sublattice.

\begin{remark}
    We remark that there are also regimes where we can confirm there is no distinction between the circle and the parabola for the weighted restriction theory of $\tilde{X}$. For example, when $\frac{3}{4} \leq \beta< 1$, we run the $TT^*$ method in the proofs of Theorems \ref{circlerestr} and \ref{circlesqrestr} and see the convolution kernel has bounded $L^1$ norm by stationary phase. Hence in this regime we have $$\|E_{S^1}\|_{L^2 (\dd s_C) \to l^2 (\tilde{X})} \sim \|E_{P^1}\|_{L^2 (\dd s_C) \to l^2 (\tilde{X})} \sim 1.$$

    It will be very interesting to prove the existence or the non-existence of a distinction for larger $\beta$ below $\frac{3}{4}$.
\end{remark}




\section{Higher dimensional results}\label{highDsection}

One can generalize the proofs of Theorems \ref{paraXlower} and \ref{circlerestr} to $\R^d$ in the same way. Let $P^{d-1}$ and $S^{d-1} \subset \R^d$ be the truncated unit paraboloid and the unit sphere, respectively, and define their Fourier extension operators $E_{P^{d-1}}$ and $E_{S^{d-1}}$ using the hypersurface volume measure $\dd \sigma_{S^{d-1}}$ and $\dd \sigma_{P^{d-1}}$ similar to the definition \eqref{defnEC}. For $0 < \beta <\frac{1}{2}$, let $$X_d = R^{\beta}\Z\times \cdots \times R^{\beta}\Z \times R^{2\beta}\Z \cap [-R, R]^d.$$ The Barcel\'{o}--Bennett--Carbery--Ruiz--Vilela example is still valid and gives

\begin{theorem}\label{paradXlower}
When $0< \beta < \frac{1}{2}$,
\begin{equation}\label{paradD}
\|E_{P^{d-1}}\|_{L^2 (\dd \sigma_{P^{d-1}}) \to l^2 (X_d)} \gtrsim R^{\frac{1}{2}-\beta}.
\end{equation}
\end{theorem}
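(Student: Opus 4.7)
The plan is to repeat the scheme behind the proof of Theorem \ref{circlerestr} --- Poisson summation followed by lattice counting --- with the circle $S^1$ replaced by $P^{d-1}$. As Remark \ref{strongpfrem} shows, this scheme produces an essentially two-sided estimate for the operator norm; the lower bound in Theorem \ref{paradXlower} therefore reduces to a lattice counting estimate, where (in contrast to the spherical setting) the numerology is extremely favorable.

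Adapting the $TT^*$/Poisson argument of \S \ref{pf2Dsec} with only trivial dimensional modifications, let $L = (R^\beta\Z)^{d-1} \times R^{2\beta}\Z$ so that $X_d = L \cap [-R, R]^d$, and let $L^* = (R^{-\beta}\Z)^{d-1} \times R^{-2\beta}\Z$. Taking $K(x) = \widecheck{\dd\sigma_{P^{d-1}}}(x)\,\psi(x/R)$, the $d$-dimensional Poisson summation formula (with the factor $R^{-(d+1)\beta}$ replacing the $R^{-3\beta}$ of \eqref{PSFK}) yields the analogue of \eqref{normiscounteq}: writing $M = \|E_{P^{d-1}}\|_{L^2(\dd\sigma_{P^{d-1}}) \to l^2(X_d)}$,
\[
M^2 \;\gesim\; R^{1 - (d+1)\beta}\;\max_{\xi_0 \in \R^d}\;\bigl|L^* \cap \cN_{C/R}(P^{d-1} + \xi_0)\bigr|.
\]
Only the $\gesim$ direction is needed, and it follows exactly as in \S \ref{pf2Dsec}.

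It now suffices to exhibit many points of $L^*$ on $P^{d-1}$ itself. For any $m \in \Z^{d-1}$ with $|m| \leq R^\beta$, the point $(mR^{-\beta}, |m|^2 R^{-2\beta})$ lies on $P^{d-1}$ (because $|mR^{-\beta}|^2 = |m|^2 R^{-2\beta}$) and automatically belongs to $L^*$ (because $|m|^2 \in \Z$). Hence
\[
\bigl|L^* \cap P^{d-1}\bigr| \;\geq\; \#\{m \in \Z^{d-1} : |m| \leq R^\beta\} \;\gesim\; R^{(d-1)\beta},
\]
and combining with the previous display gives $M^2 \gesim R^{1-(d+1)\beta + (d-1)\beta} = R^{1-2\beta}$, i.e.\ $M \gesim R^{1/2-\beta}$, which is precisely \eqref{paradD}.

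No genuine obstacle arises: the entire content is the trivial identity $|m|^2 \in \Z$, which places $\gesim R^{(d-1)\beta}$ points of the dual lattice $L^*$ \emph{exactly} on the paraboloid. It is the absence of any such algebraic identity in the spherical setting (quantified by Lemma \ref{counting2D}) that makes Theorem \ref{circlerestr}'s upper bound strictly smaller, so the same argument also displays the sphere--paraboloid distinction in its cleanest form.
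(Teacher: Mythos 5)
Your argument is correct and is exactly the route the paper itself endorses: the remark following Theorem \ref{paradXlower} states that the theorem "can also be seen by the direct generalization of \eqref{normiscounteq}," and your lower bound $\gesim R^{(d-1)\beta}$ for points of $L^*$ lying exactly on $P^{d-1}$ (via $|m|^2\in\Z$) is the standard Barcel\'{o}--Bennett--Carbery--Ruiz--Vilela lattice construction that the paper cites as its primary justification. The only step you lean on without proof is the $\gesim$ direction of the norm--count relation, but that is precisely what Remark \ref{strongpfrem} asserts, so nothing is missing.
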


\begin{remark}
    Theorem \ref{paradXlower} can also be seen by the direct generalization of \eqref{normiscounteq}. See also the proof of Theorem \ref{sphererestr} below.
\end{remark}

On the other hand, for $S^{d-1}$ we have the following analogue of Theorem \ref{circlerestr}:

\begin{theorem}\label{sphererestr}
When $0\leq \beta \leq \Omega(\frac{1}{d^2})$,
\begin{equation}\label{maindD}
\|E_{S^{d-1}}\|_{L^2 (\dd \sigma_{S^{n-1}}) \to l^2 (X_d)} \lesssim_{\e} R^{\frac{1}{2}-\frac{3}{2}\beta + \e}.
\end{equation} 
\end{theorem}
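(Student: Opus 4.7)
My plan is to generalize the proof of Theorem \ref{circlerestr}. The Poisson-summation and $TT^*$ steps from \S\ref{pf2Dsec} carry over with no essential change: taking $L = R^\beta\Z^{d-1} \times R^{2\beta}\Z$ (so $L$ has covolume $R^{(d+1)\beta}$) and observing that $\hat K = \dd\sigma_{S^{d-1}} * (R^d\hat\psi(R\cdot))$ is bounded by $O(R)$ on $\cN_{O(1/R)}(S^{d-1})$ (since $R^d \cdot R^{-(d-1)} = R$ via the mollifier peak and the $(d-1)$-dimensional cap area), the $d$-dimensional analogue of \eqref{normiscounteq} yields
\[
M^2 \sim R^{1-(d+1)\beta}\max_U |L^* \cap \cN_{O(1/R)}(U)|
\]
with $U$ ranging over translates of $S^{d-1}$. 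So \eqref{maindD} follows provided $\max_U |L^* \cap \cN_{O(1/R)}(U)| \lesssim_\e R^{(d-2)\beta + \e}$. After rescaling by $\mathrm{diag}(R^\beta,\ldots,R^\beta,R^{2\beta})$ and setting $r = R^\beta$, this is equivalent to a $d$-dimensional analogue of Lemma \ref{countingthm}: every translate of the Minkowski sum of the ellipsoid $(x_1/r)^2 + \cdots + (x_{d-1}/r)^2 + (x_d/r^2)^2 = 1$ with the box $[-cr^{-(1-\beta)/\beta}, cr^{-(1-\beta)/\beta}]^{d-1} \times [-cr^{-(1-2\beta)/\beta}, cr^{-(1-2\beta)/\beta}]$ contains $O_\e(r^{d-2+\e})$ integer points.

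I prove this counting claim by lifting the linear algebra argument of Lemma \ref{countingthm} to $d$ dimensions. Let $N = \binom{d+2}{2}$ and let $v(x) \in \R^N$ be the vector of all monomials in $x_1, \ldots, x_d$ of degree $\leq 2$. For any $N$ integer points in the neighborhood, decompose $v_j = \mu_j + \nu_j$, with $\mu_j$ the monomial vector at a nearby point on the translated ellipsoid. The $\mu_j$'s satisfy the linear equation of the ellipsoid in monomial coordinates, so $\det(\mu_1,\ldots,\mu_N) = 0$. For each monomial $\pi$, let $M_\pi$ denote its maximum size on the ellipsoid (using $|x_i| \leq r$ for $i < d$ and $|x_d| \leq r^2$); a direct enumeration gives $\sum_\pi \log_r M_\pi = (d+1)(d+2)$. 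Since the mean value theorem yields $|\nu_{j,\pi}| \lesssim M_\pi r^{-1/\beta}$ uniformly in $j,\pi$, the permutation expansion of $\det(v_1,\ldots,v_N)$ (in which only terms with at least one $\nu$-factor survive) is bounded by $C_d \cdot r^{(d+1)(d+2) - 1/\beta}$. For $\beta < 1/((d+1)(d+2) + O(1)) = \Omega(1/d^2)$ and $r$ sufficiently large, this is less than $1$; being an integer, it vanishes, and all integer points in the neighborhood lie on a common quadric. A Cramer's-rule argument as in the \textbf{Claim} within the proof of Lemma \ref{countingthm} then produces a quadric $Q$ with bounded-height rational coefficients whose quadratic part is non-degenerate and close to that of the ellipsoid.

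Finally, I count integer points of $Q$ in the relevant box. Since $Q$'s quadratic part is close to $\mathrm{diag}(r^2,\ldots,r^2,1)$ after clearing denominators, its integer solutions are confined to an anisotropic region, and slicing $Q$ at each integer value of $x_d$ yields a non-degenerate positive-definite quadric in $\R^{d-1}$ of effective size $\leq r$. Classical lattice counting on quadric hypersurfaces, with Heath-Brown's Theorem 3 in \cite{heath1997density} as the $d=2$ base case (giving $O_\e(r^\e)$ integer points on the sliced conic) and induction on dimension, propagates to the desired total count $O_\e(r^{d-2+\e})$.

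The main obstacle is the determinant estimate in the second paragraph. Since $N = \binom{d+2}{2} \sim d^2/2$, a naive row-Hadamard bound $\prod_j \|\mu_j\|_2 \lesssim r^{4N}$ would force $\beta = O(1/d^4)$. The sharper $\beta = \Omega(1/d^2)$ asserted in the theorem requires the column-by-column observation $\sum_\pi \log_r M_\pi = (d+1)(d+2)$, whose exponent grows quadratically rather than quartically in $d$; careful bookkeeping of the monomial sizes is essential. Minor additional care is also needed when the $\mu_j$'s fail to be linearly independent, in which case one extends the Cramer's-rule approximation to the orthogonal complement of their span, exactly as in \S\ref{pf2Dsec}.
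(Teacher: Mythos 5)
Your reduction to the rescaled counting problem, and your linear-algebra step forcing all integer points in the thin neighborhood onto a single rational quadric $Q$, both match the paper's argument; in fact your column-by-column bookkeeping $\sum_\pi \log_r M_\pi = (d+1)(d+2)$ is a clean explicit version of the paper's $r^{O(d^2)}$ determinant bound, and you correctly identify that the naive Hadamard bound would only give $\beta = O(1/d^4)$.

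The gap is in your final counting step on $Q$. You slice at integer values of $x_d$, which is the \emph{long} axis of the ellipsoid: $x_d$ ranges over $O(r^2)$ integers, and each slice is a quadric of size $\lesssim r$ in $\R^{d-1}$, which can carry as many as $\sim r^{d-3}$ integer points (a $(d-2)$-sphere of radius $\rho$ in $\R^{d-1}$ typically has $\sim \rho^{d-3}$ lattice points, and $\rho \sim r$ for most slices). So the best your slicing-plus-induction can give is $O(r^2) \cdot O_{\e}(r^{d-3+\e}) = O_{\e}(r^{d-1+\e})$, one full power of $r$ above the target $O_{\e}(r^{d-2+\e})$. Already at $d=3$ this is visible: slicing $r^2x_1^2+r^2x_2^2+x_3^2=r^4$ at $x_3=c$ gives $O(r^2)$ circles with $O_{\e}(r^{\e})$ points each, i.e.\ $O_{\e}(r^{2+\e})$, while the target is $O_{\e}(r^{1+\e})$. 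The correct move (and what the paper does) is to slice in the \emph{short} directions: freeze $x_1,\ldots,x_{d-2}$, each of which takes only $O(r)$ values on the neighborhood of the ellipsoid, and observe that the resulting conic in $(x_{d-1},x_d)$ has quadratic part close to $\mathrm{diag}(r^2,1)$, hence non-degenerate, so Heath-Brown's Theorem 3 applies directly and gives $O_{\e}(r^{\e})$ points per slice and $O_{\e}(r^{d-2+\e})$ in total. With that substitution your argument closes; as written, the induction does not.
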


\begin{proof}[Sketch of the proof of Theorem \ref{sphererestr}]
    The proof will follow exactly the same framework as in the proof of Theorem \ref{circlerestr} and we only provide a sketch here, highlighting how the numerology works.

    We use $M_d$ to denote $\|E_{S^{d-1}}\|_{L^2 (\dd \sigma_{S^{n-1}}) \to l^2 (X_d)}$. In this proof, $L$ will denote $R^{\beta}\Z\times \cdots \times R^{\beta}\Z \times R^{2\beta}\Z \subset \R^d$. Analogous to \eqref{normiscounteq}, we obtain
    \begin{equation}\label{normiscounteqhighd}
    M_d^2 \sim R^{1-(d+1)\beta} \max_{U=\text{ a translation of }\cN{\frac{1}{R}}(S^{d-1})} |U \cap L^*|.
\end{equation}

Hence it suffices to prove that when $0< \beta < \Omega(\frac{1}{d^2})$, the counting on the right hand side is 
\begin{equation}\label{highdcountingbd}
O_{\e} (R^{(d-2)\beta+ \e}).
\end{equation}
This can be deduced directly from Theorem 3 in \cite{heath1997density} as follows:

We do a rescaling and just like how we reduced the two dimensional case to Lemma \ref{countingthm}, we want to bound the number of lattice points in any translation of the Minkowski sum of $$Ell_d: r^2 x_1^2 + \cdots + r^2 x_{d-1}^2 + x_d^2 = r^4$$ and $$\theta\left([-r^{-\frac{1-\beta}{\beta}}, r^{-\frac{1-\beta}{\beta}}]\times \cdots \times [-r^{-\frac{1-\beta}{\beta}}, r^{-\frac{1-\beta}{\beta}}] \times [-r^{-\frac{1-2\beta}{\beta}}, r^{-\frac{1-2\beta}{\beta}}]\right)$$ where $0<\theta<1$ is a small constant at our disposal and $r = R^{\frac{1}{\beta}}>10$ without loss of generality. Think of $r^{-\frac{1-\beta}{\beta}}$ and $r^{-\frac{1-2\beta}{\beta}}$ as ``error'' (denoted by $Error$). We will again find a rational quadratic hypersurface (denoted as $Q$) with height $O(r^C)$ and all coefficients $\theta$-close to that of a translated $Ell_d$. To achieve this, again find a set of integer points under consideration such that the vectors $(x_1, \ldots, x_d, x_1^2, x_1 x_2, \ldots, x_1 x_d, x_2^2, \ldots, x_d^2)$ form a maximal independence set. Consider nearby points on the translation of $Ell_d$ and compare the determinant again, we see the error of the determinant is $O_d (\theta\cdot r^{O(d^2)}\cdot Error)$. As long as this is $<1$, the above vectors span a space that does not have full rank. Now the same procedure as in the proof of Lemma \ref{countingthm} produces the above said quadratic hypersurface $Q$ as long as $O_d(\theta\cdot r^{O(d)}r^{O(d^2)}r^{O(d^2)}\cdot Error) < \theta$ (the first two factors correspond to upper bounds on $|\alpha|$ and $\|T-\tilde{T}\|$ in the language of the proof of Lemma \ref{countingthm}).

Now our problem is reduced to counting integer points on $Q$, all of whose coefficients are $\theta$-close to that of a translation of $Ell_d$. Looking at the spatial position of the ellipsoid $Ell_d$, we see there are $O_d(r)$ possibilities for each of $x_1, \ldots, x_{d-2}$. Freezing a particular choose of $x_1, \ldots, x_{d-2}$, Theorem 3 in \cite{heath1997density} bounds the number of possible lattice points by $O_{d, \e}(r^{\e})$ as long as $\theta$ is very small. By this we get \eqref{highdcountingbd}, as long as $O_d(\theta \cdot r^{O(d)}r^{O(d^2)}r^{O(d^2)}\cdot Error) < \theta$. This can be guaranteed for the range $0< \beta < \Omega(\frac{1}{d^2})$ with the implicit constants and $\theta$ carefully chosen.

\end{proof}




\section{Further directions}

Part of our purpose of studying the weighted restriction theory of objects like $X$ and $\tilde{X}$ is to find new insights in understanding the weighted restriction theory for more general subsets, especially those that are sufficiently spread out. Here is a precise question of interest:

\begin{ques}\label{onepointques}
    Let $Y$ be a set of points in $B_R \subset \R^2$ such that there is at most one $y\in Y$ in any $R^{\frac{1}{2}}$-ball. What is $$\|E_{S^1}\|_{L^2 (\dd s_C) \to l^2 (Y)}$$ and $$\|E_{P^1}\|_{L^2 (\dd s_P) \to l^2 (Y)}?$$
\end{ques}

And in the spirit of this paper, we can ask:
\begin{ques}\label{distinctionques}
    Are the two operator norms in Question \ref{onepointques} different?
\end{ques}

Our $Y$ form a family of typical and interesting examples of sets that are very spread out. We already saw the operator norms are very different for $\tilde{X}$ and perhaps should expect the same for general $Y$. A positive answer to Question \ref{distinctionques} will shed light on the study of many central problems in geometric measure theory and analysis such as certain variants of the Falconer's distance problem. See, for example, \eqref{LinftyFalconer} above. Unfortunately, when $Y$ does not have a lattice structure it seems very difficult to prove the difference.

\vskip.125in 

Another realm where the difference between the circle and a parabola can potentially be exhibited is vector spaces over finite fields. To be more explicit, we fix $d=2$ and consider $$ S=\{x \in {\mathbb F}_q^2: x_1^2+x_2^2=1\},$$ where ${\mathbb F}_q$ is the finite field with $q$ elements.
Let 
$$ P=\{x \in {\mathbb F}_q^2: x_2=x_1^2\}.$$

Fix a non-trivial (additive) character $\chi$  on ${\mathbb F}_q$. Given $f: {\mathbb F}_q^2 \to {\mathbb C}$, define the Fourier transform of $f$ by 
$$\hat{f} (m) = q^{-2} \sum_{x \in {\mathbb F}_q^2} \chi(-x \cdot m) f(x).$$

Identifying $P$ with its indicator function, it is not difficult to check using elementary properties of Gauss sums that 
$\widehat{P}(0,0)=q^{-1}$, $\widehat{P}(m_1,0)=0$ if $m_1 \neq 0$, and $|\widehat{P}(m)|=q^{-\frac{3}{2}}$ if $m_2 \neq 0$.  On the other hand,  It is well known (see e.g. \cite{IR07}) that 
$$ |\widehat{S}(m)| \leq 2q^{-\frac{3}{2}},$$ using the properties of twisted Kloosterman sums, but the actual distribution of values of $\widehat{S}(m)$ is very subtle. See, for example, the work \cite{Katz1988} on Kloosterman angles. This work makes it clear that $|\widehat{S}(m)|$ can be considerably smaller than $2q^{-\frac{3}{2}}$, but how much and how often is a very delicate question.

\begin{ques} \label{finitefields}  

How often can it happen for $$|\widehat{S}(m)| < q^{-1.51},$$ and what can we say about the set of these $m$?
\end{ques} 

There are numerical and heuristic reasons to believe that such an estimate holds for a non-trivial number of values of the input variables. We plan to address this question in detail in the sequel. 



\bibliographystyle{plain}
\bibliography{main}

\end{document}